\documentclass[11pt, leqno, 
oneside]{amsart} 

\usepackage{amsmath,amsthm,amssymb} 
\usepackage{amsfonts} 
\usepackage{txfonts} 
\usepackage{thmtools} 

\usepackage{geometry} 

\usepackage[dvipsnames, svgnames]{xcolor} 

\usepackage[shortlabels]{enumitem} 

\usepackage[colorlinks,citecolor=blue,hypertexnames=false,urlcolor=blue,breaklinks=true]{hyperref} 
\newcommand{\Lim}[1]{{\displaystyle \lim_{ #1}}}
\newcommand{\Int}[2]{{\displaystyle \int_{ #1}^{ #2}}}
\newcommand{\Frac}[2]{\displaystyle{\frac{\displaystyle{#1}}{\displaystyle{#2}}}}
\newcommand{\pde}[2]{{\displaystyle \frac{\mbox{$\partial #1$}}{\mbox{$\partial #2$}}}} 

\usepackage{accents} 
\usepackage{exscale} 
\usepackage{dsfont} 
\usepackage{mathrsfs} 
\usepackage[T1]{fontenc} 

\usepackage{etoolbox}
\appto\appendix{\addtocontents{toc}{\protect\setcounter{tocdepth}{1}}}

\newcommand{\be}{\begin{equation}}
	\newcommand{\ba}{\begin{array}}
		\newcommand{\ea}{\end{array}}
\newcommand{\eeq}[1]{\label{eq:#1}\end{equation}}
\newcommand{\ET}[1]{\end{sl}\label{theorem:#1}\end{theorem}}
\newcommand{\Bt}{\begin{theorem}\begin{sl}}
\newcommand{\Eqref}[1]{{\rm (\ref{eq:#1})}}
 
\newcommand{\EL}[1]{\end{sl}\label{lemm:#1}\end{lemm}}
\numberwithin{equation}{section} 
\newcommand{\cald}{\mathcal D}

\theoremstyle{plain}
\newtheorem{theorem}{Theorem}[section]
\newtheorem{lemma}{Lemma}[section]

\theoremstyle{definition}

\theoremstyle{remark}
\newtheorem{remark}[theorem]{Remark}

\newtheorem{claim*}{Claim}

\newcommand{\bb}[1]{\mathbb{#1}}

\newcommand{\dist}{\operatorname{dist}}

\newcommand{\ra}{\rightarrow}
   
\newcommand{\loc}{\operatorname{loc}}     

\DeclareMathOperator{\supp}{supp}

\newcommand{\Div}{\mbox{\rm div}\,} 
\newcommand{\R}{\mathbb R}

\newcommand{\curl}{\mbox{curl}\,}

\def\XXint#1#2#3{{\setbox0=\hbox{$#1{#2#3}{\int}$}
\vcenter{\hbox{$#2#3$}}\kern-.5\wd0}}

\def\YYint#1#2#3{{\setbox0=\hbox{$#1{#2#3}{\iint}$}
\vcenter{\hbox{$#2#3$}}\kern-.51\wd0}}
 
\allowdisplaybreaks

\setlist{nosep}

\begin{document}

\author[R. M. Chen]{Robin Ming Chen}
\email[]{mingchen@pitt.edu}
\address{Department of Mathematics, University of Pittsburgh, Pittsburgh, PA, 15260}

\author[G. P. Galdi]{Giovanni P. Galdi}
\email[]{galdi@pitt.edu}
\address{Department of Mechanical Engineering and Materials Science, University of Pittsburgh, Pittsburgh, PA, 15261}

\author[B. Poggi]{Bruno Poggi}
\email[]{brunopoggi@pitt.edu}
\address{Department of Mathematics, University of Pittsburgh, Pittsburgh, PA, 15260}

\author[A. Schikorra]{Armin Schikorra}
\email[]{armin@pitt.edu}
\address{Department of Mathematics, University of Pittsburgh, Pittsburgh, PA, 15260}

\newcommand{\eps}{\varepsilon}
\newcommand{\aleq}{\precsim}
\newcommand{\ageq}{\succsim}
\newcommand{\aeq}{\asymp}
\newcommand{\Ds}[1]{|D|^{#1}}

\title[On Serrin Criterion ]{On Serrin Interior Regularity Criterion\\ for Navier-Stokes Equations}

\begin{abstract} We revisit Serrin's interior spatial regularity criterion for distributional solutions to the Navier-Stokes equations in $\mathbb R^3$ and considerably relax the hypotheses in two main directions. More precisely, we show that if $u\in{L_t^{s'}L_x^s}$ locally is a distributional solution to the Navier-Stokes equations with $\frac2{s'}+\frac3s=1$ for $s'\in[4,\infty)$, then $u\in L^q_t(C_x^\infty)$ locally for all $q\in(2,s')$. If $s'\in(2,4)$, the same conclusion holds provided that in addition $u\in L_t^4(L_x^p)$ locally, for some $p>1$. 

In particular, we remove any integrability hypothesis on the vorticity, and we reduce the requirement of integrability in time all the way to $L^4$ from $L^\infty$. To achieve this, we employ a new bootstrap argument, distinct from Serrin's, and we argue that a reduction of the exponent in time integrability does not follow from Serrin's original argument.
\end{abstract}

\maketitle

\hypersetup{linkcolor=blue}

\tableofcontents

\section{Introduction} As is well known, Serrin's regularity criterion \cite{Serrin} constitutes a cornerstone in the theory of the Navier-Stokes equations. Precisely, let $Q=B\times(0,T)$, with $B$ a ball in $\mathbb{R}^3$, be a space--time cylinder, and let $\mathcal{D}(Q)$ be the space of solenoidal functions in $C_0^\infty(Q)$. A vector field $u\in L^2(Q)$,  is a distributional solution to the Navier-Stokes equations if
\begin{equation}\ba{ll}\medskip\displaystyle
\int_Q u\cdot(\varphi_t+\Delta \varphi+u\cdot\nabla\varphi)\,{\rm d}x{\rm d}t=0\,,\ \ \mbox{for all $\varphi\in \mathcal D(Q)$}\,,\\\displaystyle
\int_Bu\cdot\nabla\phi\, {\rm d}x=0\,,\ \ \mbox{for all $\phi\in C_0^\infty(B)$\,.}
\ea
\end{equation}
Then, in 3D, the criterion states that if $u$  is any distributional solution such that\footnote{As usual, $L^p(X)$ denotes $L^p$-integrability in time with values in $X$. When $X=L^q$, we may also use the notation $L^{p,q}$. For further notation, see Remark \ref{rm.notation}.}
\be
u\in L^\infty(L^2(B))\,,\ \ \omega:=\curl u\in L^{2}(Q)
\eeq{2}
and, in addition,
\be
u\in L^{s'}(L^s(B))\,,\ \ \ \frac2{s'}+\frac3s=1\,,\ \ s'\in(2,\infty)\,,
\eeq{3}
then $u$ is $C^\infty$ in the spatial variables and every spatial derivative of $u$ belongs to $L^\infty(Q')$ for any $Q'\Subset Q$. For brevity, we denote this property by
\be
u\in L^\infty\big(C^\infty\big)\,. 
\eeq{4}

Actually, Serrin proved \Eqref{4} under the stronger assumption that the exponents $s',s$ satisfy the relation above with a strict inequality sign. The proof of the result, as stated here, is due to Struwe \cite{Str} and Takahashi \cite{Taka}. 
\par
It is worth emphasizing that \Eqref{4} yields regularity {\em only} in the spatial variables; no gain in time regularity is obtained beyond that assumed for $u$ in \Eqref{2}. This feature is, in general, sharp. Indeed, Serrin's celebrated example \cite{Serrin} shows that the function
$$
u=a(t)\nabla\psi, \qquad \Delta\psi=0 \ \text{in}\ B,
$$
with $a\in L^2(0,T)$, is a distributional solution that is $C^\infty$ in space, whereas its regularity in time is exactly that of the coefficient $a$. 
\par
However, this same example brings up an interesting question, namely,  whether the spatial regularity provided by \Eqref{4} persists if the first assumption on $u$ in \Eqref{2} is relaxed to $u\in L^q(L^2(B))$ for some $q\in(2,\infty)$. In other words, with the obvious meaning of the notation  (see Remark \ref{rm.notation}), does it still follow that $u\in L^q(C^\infty)$?\footnote{We thank Professor Igor Kukavica for bringing this problem to our attention.} Interestingly, Serrin's method does not appear to be capable of yielding a positive answer. In fact, the assumption that $u\in L^\infty(L^2(B))$ is an essential ingredient of his argument. To clarify this point, let us briefly review the main steps of the proof. It is based on a boot-strap procedure that uses the following two basic properties. First, the (local) Helmholtz decomposition of $u$:
\be   
u(x,t)=\int_B\nabla \mathscr E(x-y)\times\omega(y,t)\,{\rm d}y +\Phi(x,t)\ \ \mbox{in $Q$}\,, 
\eeq{5}
with $\mathscr E$ Laplace's fundamental solution and $\Phi\in C^\infty(B)$\,. Moreover, observing that $\omega$ satisfies, in the distributional sense, the equation
\be
\omega_t-\Delta\omega=\Div(u \otimes \omega - \omega \otimes u)\ \ \mbox{in $Q$}
\eeq{om}
one gets
the integral representation:
\be  
\omega(x,t)=\nabla K\star (u \otimes \omega - \omega \otimes u)+S(x,t)\ \ \mbox{in $Q$}
\eeq{6}
where $K$ is the heat kernel and $S$ is a solution to the heat equation.
Now, assuming only  the second assumption on $\omega$ in \Eqref{2}  and \Eqref{3}, one shows that
$$
\omega\in L^\beta(L^\infty)
$$
for every $\beta\in(1,\infty)$; see \cite{Serrin,Taka}. Then, plugging this information into \Eqref{5} and supposing that $u$ is $L^q$ in time, for some $q\in(2,\infty)$, entails $u \otimes \omega - \omega \otimes u=:g\in L^q(L^\infty)$. Consequently, recalling that $q>2$, \Eqref{6} and classical regularity theory imply that 
$$
\omega\in L^\infty(C^\alpha)\,,\ \ \alpha\in (0,1)\,.
$$
In view of \Eqref{5}, this furnishes
$$
u\in  L^q(C^{1,\alpha})\,,
$$
which implies
$$
g\in L^q(C^\alpha)\,.
$$
At this point, to boot-strap the argument with the {\em same} $q$, we should use the latter into \Eqref{6} and hope to  obtain
$$
\omega\in L^\infty(C^{1,\alpha})\,.
$$
However, from well-known results on the heat equation (e.g. \cite[Theorem 3.1]{Krylov}),
we only get
$$
\omega\in L^q(C^{1,\alpha})\,,
$$ 
which, once used in \Eqref{5}, furnishes
$$
u\in L^q(C^{2,\alpha})\,.
$$
Therefore,
$$
g\in L^{\frac q2}(C^{1,\alpha})
$$
which then gives
$$
\omega \in L^{\frac q2}(C^{2,\alpha})\,, \ \ \mbox{if\, $q/2>2$}\,,
$$
and so, by \Eqref{5},
$$
u \in L^{\frac q2}(C^{3,\alpha})\,, \ \ \mbox{if \,$q>4$}\,,
$$
and so forth. The foregoing argument reveals that improving the spatial regularity of $u$ demands a parallel improvement in its time regularity. While this poses no difficulty under Serrin's assumption $u\in L^\infty$, it breaks down when one merely assumes $u\in L^q$ for a {\em fixed} $q<\infty$, no matter how large $q$ is.
\par
The aim of this note is to employ a bootstrap argument different from Serrin's and thereby answer the foregoing question in the affirmative. In fact, we establish a stronger result. More precisely, we show that the assumption \Eqref{2}$_2$ can be substantially weakened and, for $s'$ in a suitable range, can even be dispensed with altogether. Our main theorem reads as follows.

\begin{theorem}\label{theo:maingen}
Let $u$ be a distributional solution to the Navier-Stokes equations such that
\be
u\in L^{s'}(L^s(B))\,,\ \ 
2/s'+3/s=1\,,\ \ s'\in (2,\infty)\,.
\eeq{B11}
Then, if $s'\in[4,\infty)$,
$$
u\in L^q(C^\infty) \quad   \mbox{ for all } \ q \in (2, s'). 
$$
If $s'\in (2,4)$, the same conclusion holds, provided that, in addition to \Eqref{B11}, $u\in L^{4}(L^p(B))$ for some $p>1$.
\end{theorem}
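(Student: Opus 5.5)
The plan is to avoid the vorticity altogether and to bootstrap on a local decomposition of $u$ into a ``parabolic'' part and a ``harmonic-in-space'' part, in the spirit of Wolf's local pressure projection. Fix a subcylinder $Q'=B'\times(t_0,T)\Subset Q$. Since $u$ is weakly solenoidal and satisfies the equations against all $\varphi\in\mathcal D(Q)$, de Rham's theorem gives a distributional pressure with $u_t-\Delta u+\Div(u\otimes u)+\nabla \pi=0$. I will write, on $B'$,
\[
u=v+\nabla h,\qquad \Delta h(\cdot,t)=0\ \text{in } B',
\]
where $v$ solves the (localized) Stokes system $v_t-\Delta v+\nabla\pi_1=-\Div\big(\eta(u\otimes u)\big)$, $\Div v=0$, with $\eta$ a cutoff and $v$ vanishing at $t_0$. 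Here $\nabla h$ absorbs the time-irregular, spatially harmonic part, as in Serrin's example. Interior estimates for harmonic functions then give $\nabla h\in L^{s'}(C^k(B''))$ for every $k$ and every $B''\Subset B'$, with the time exponent inherited from $u$. All the difficulty is thus moved onto $v$.

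\textbf{Step 1 (pressure and starting regularity).} I need $u\otimes u$ to be at least $L^2$ in time, so that maximal $L^r$ regularity for the Stokes system applies. When $s'\ge4$ this follows from $u\in L^{s'}(L^s)$, giving $u\otimes u\in L^{s'/2}(L^{s/2})$ with $s'/2\ge2$. When $s'\in(2,4)$ this is exactly where the extra hypothesis $u\in L^4(L^p)$ enters: interpolating it with $L^{s'}(L^s)$ gives $u\otimes u\in L^2(L^{\tilde p})$ for some $\tilde p>1$. With this, $v$ has one spatial derivative in $L^2(L^{\tilde p})$ and is bounded in time with values in a negative-order space.

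\textbf{Step 2 (critical-to-subcritical step).} This is the main obstacle, since $L^{s'}(L^s)$ is scaling critical. Because $s'<\infty$, the norm $\|u\|_{L^{s'}(L^s)}$ is small on short time slabs. I will write
\[
u\otimes u=v\otimes v+v\otimes\nabla h+\nabla h\otimes v+\nabla h\otimes\nabla h
\]
and run a fixed-point/absorption argument for $v$ in spaces $L^{a}(W^{1,b})$, with $(a,b)$ moving off the scaling line. The terms linear in $v$ carry the small factor $\|u\|_{L^{s'}(L^s)}$ and can be absorbed. The key structural observation is that
\[
\Div(\nabla h\otimes\nabla h)=\nabla\tfrac12|\nabla h|^2
\]
is a pure gradient, since $h$ is harmonic. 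So, up to cutoff commutators that are smooth in space, this term goes into the pressure and never degrades the time integrability of $v$. Starting from Step 1 and iterating finitely many times, I aim to reach $v\in L^{q}(L^\infty(B''))$ with time integrability strictly better than that of $u$ (ideally $v\in L^\infty_t$ up to an arbitrarily small loss).

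\textbf{Step 3 (bootstrap with fixed time exponent).} Suppose $v\in L^{r}(C^{k,\alpha})$ with $r$ large, and $\nabla h\in L^{s'}(C^{m})$ for all $m$. Then:
\begin{itemize}
\item the forcing $v\otimes v$ lies in $L^{r/2}(C^{k,\alpha})$;
\item $v\otimes \nabla h$ lies in $L^{rs'/(r+s')}(C^{k,\alpha})$;
\item $\nabla h\otimes\nabla h$ only contributes a gradient.
\end{itemize}
Parabolic Schauder/Krylov estimates (\cite[Theorem 3.1]{Krylov}) for the Stokes system then give $v\in L^{\rho}(C^{k+1,\alpha})$ with $\rho$ larger than the forcing exponent, because the Stokes solution operator gains time integrability when the data is in $L^{>2}$ in time. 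The point, in contrast to Serrin's scheme, is that the time exponent of the forcing never falls below roughly $\min(r/2,\,rs'/(r+s'))$, and this stays above any prescribed $q<s'$ if $r$ was taken large in Step 2. One may also shrink $\alpha$ and trade a little spatial regularity for time integrability through interpolation at each step.

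Iterating over $k$ on nested cylinders gives $v\in L^{q}(C^k)$ for all $k$, and hence $u=v+\nabla h\in L^q(C^\infty)$ for every $q\in(2,s')$. The delicate point I expect is making the absorption in Step 2 quantitative while keeping track of the cutoff commutator terms, which involve $\pi$, so that the gain in time integrability for $v$ really beats the loss coming from products.
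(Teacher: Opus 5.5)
Your route differs from the paper's. The paper proves $\omega\in L^{2,2}$ in Section~3, quotes Serrin--Takahashi for $\omega\in L^\beta(L^\infty)$, $\beta<\infty$, and then bootstraps the vorticity equation. However, the central step of your argument is missing.

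Step~2 is the whole critical-to-subcritical passage, and you only announce it. An absorption $\|v\|_X\le C\|u\|_{L^{s'}(L^s)}\|v\|_X+\cdots$ in $X=L^a(W^{1,b})$ presupposes $\|v\|_X<\infty$, which is exactly what is unknown. A rigorous version has to do four things:
(i) construct a solution of the problem linearized around $u$ directly in $X$ by contraction;
(ii) identify it with your $v$ by uniqueness in $L^{s'}(L^s)$;
(iii) handle the initial layer of each short time slab, since Lemma~\ref{lem:HE} gives smoothness only on interior cylinders;
(iv) localize correctly.
For (i), the Young/Hardy--Littlewood--Sobolev computation in the proof of Lemma~\ref{lem:Oseen} does show that $z\mapsto$ the Oseen potential \Eqref{SS} with $\mathbb F=\eta\,u\otimes z$ is bounded on $L^a(L^b)$ with norm $\lesssim\|u\|_{L^{s'}(L^s)}$. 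Point (iv) is not cosmetic. The difference $u-v$ solves the homogeneous Stokes system, and hence splits into a smooth solenoidal part plus a harmonic gradient (it is not just $\nabla h$), only where $\eta\equiv1$. On $\{0<\eta<1\}$ neither your expansion of $u\otimes u$ nor the identity $\Div(\nabla h\otimes\nabla h)=\nabla\tfrac12|\nabla h|^2$ is available, so the fixed point cannot be run with global-in-space data as you describe. None of this is in the proposal, and you cannot import it from the literature. The Serrin--Takahashi result requires $\omega\in L^{2,2}$, which your argument never establishes; supplying that hypothesis is precisely the role of Lemmas~\ref{lem:vorticity} and~\ref{lem:velocity}.

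Two other steps fail as written. In Step~1, interpolating $L^4(L^p)$ with $L^{s'}(L^s)$, $s'<4$, only lowers the time exponent: $1/a=\theta/s'+(1-\theta)/4>1/4$ for $\theta>0$. So getting $u\otimes u\in L^2_t$ forces you to use $L^4(L^p)$ alone, which yields $L^2(L^{p/2})$ and no $\tilde p>1$ when $p\le 2$. Besides, maximal regularity does not require $L^2_t$. In the paper the extra hypothesis is used only on the harmonic part, $\nabla\phi\in L^{4,\rho}$ for all $\rho$ by \Eqref{Hd}. The time integrability of $u_\sigma$ is instead raised above $4$ by iterating Lemma~\ref{lem:Oseen} along the scaling line.

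In Step~3, you assert that Krylov's estimate gives $v\in L^\rho(C^{k+1,\alpha})$ with $\rho$ larger than the exponent $\sigma$ of the forcing $\Div G$, $G\in L^\sigma(C^{k,\alpha})$. This is false: it is a full two-derivative gain, and parabolic scaling then allows no improvement of the time exponent. This is exactly the loss that breaks Serrin's scheme, and a large $r$ from Step~2 cannot absorb infinitely many halvings. The missing idea is Lemma~\ref{lem:Bes}. From the $L^{r/2}$-in-time forcing $\Div(v\otimes v)$, gain only $\ell=2-2/r$ derivatives; this returns you to $L^r$ with net gain $1-2/r>0$ per step. For $\Div(v\otimes\nabla h)\in L^\sigma$ with $1/\sigma=1/r+1/s'$, take $\ell=2-2/s'$, giving net gain $1-2/s'$. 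With this correction Step~3 becomes the paper's bootstrap, run on $v$ instead of $\omega$, and it works for every $r>2$.
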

Our boot-strap argument goes as follows. We begin by showing that
\[
\omega\in L^q(H^{1,r}),
\qquad r\in(1,\infty),
\]
which, by \Eqref{5}, implies
\[
u\in L^q(H^{2,r}).
\]
As a consequence, 
\[
F:=\Div(u \otimes \omega - \omega \otimes u)\in L^{q/2}(L^r).
\]
We then exploit parabolic regularity for \Eqref{om} in Bessel potential spaces to show that, for sufficiently large $r$,
\[
\omega\in L^q(H^{\ell_1,r})
\]
for some  $\ell=\ell(q)>1$, while preserving the {\em same} time integrability exponent $q$. It follows again from \Eqref{5} that
\[
u\in L^q(H^{\ell +1,r})\,,
\]
which, in turn, yields
\[
F\in L^{q/2}(H^{k,r})\,,\ \   k:=\ell_1-1>0.
\]
Using this information in \Eqref{om}, we then deduce, at the next step, 
$$
\omega\in L^q(H^{\ell + k,r})
$$
Iterating this argument, we obtain eventually
\[
\omega\in L^q(C^\infty),
\]
and consequently,
\[
u\in L^q(C^\infty).
\]   
As regards the assumption $\omega\in L^2(Q)$, we prove that it follows from \Eqref{B11}. Indeed, by adapting the argument of \cite{GAr}, one obtains $\omega\in L^2(Q)$ for all $s'\in[4,\infty)$, while for $s'\in(2,4)$ the same conclusion holds provided that $u\in L^4(L^p(B))$, for some $p>1$.

The remainder of the paper is organized as follows. In Section~2 we prove Theorem~\ref{theo:maingen} under the additional assumption $\omega\in L^2(Q)$. In Section~3 we show that this assumption can be removed under the above conditions on $s'$.

\begin{remark}\label{rm.notation} Let us make a remark about notation. Let $X$ be a function space defined over open sets $U$ of $\bb R^3$, and $r\in[1,\infty]$. We say $f\in X_{\loc}(U)$ if $f\in X(U')$ for all open $U'$ compactly contained in $U$. For a vector-valued function $g:(T_1,T_2)\ra X_{\loc}$, we denote 
\begin{equation}\label{eq.local}
g\in L^r(X)
\end{equation}
to mean that $g\in L^r_{\loc}((T_1,T_2);X_{\loc})$. Hence note that the notation (\ref{eq.local}) implies only a control over compactly contained cylinders in $U\times(T_1,T_2)$, and is therefore different from the `global' control $g\in L^r((T_1,T_2);X(U))$. 

If $X=L^q$, then we may write $g\in L^{r,q}$ instead of $g\in L^r(L^q)$.
\end{remark}

\setcounter{equation}{0}

\section{A First Regularity Result}\label{sec.general}
We begin  to show Theorem \ref{theo:maingen} under the extra assumption $\omega\in L^2(Q)$. This assumption will be removed in the next section, thus completing the proof of the theorem. 
\begin{theorem}
\label{theo:02} Let $u$ be a distributional solution to the Navier-Stokes equations such that
\be
u\in L^{s',s}\,,\ \ 
2/s'+3/s=1\,,\ \ s'\in (2,\infty)\,,\ \ \omega\in L^2(Q)\,.
\eeq{B111}
Then, 
$$
u\in L^q(C^\infty) \quad  \mbox{ for all } \ q \in (2, s'). 
$$
\end{theorem}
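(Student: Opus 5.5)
We follow the bootstrap sketched in the Introduction and work throughout on a decreasing family of nested cylinders $Q_{j+1}\Subset Q_j\Subset Q$. We localize with smooth cutoffs $\eta_j$ in space-time and use the Biot--Savart/Helmholtz representation \Eqref{5} together with the vorticity equation \Eqref{om}.

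\textbf{Step 1: initial regularity of $\omega$.} Fix $q\in(2,s')$. We first reproduce the known step of \cite{Serrin,Taka}: since $\omega\in L^2(Q)$ and $u\in L^{s',s}$ with $2/s'+3/s=1$, the localized vorticity $\eta\omega$ solves a heat equation. Its right-hand side is $\Div(u\otimes\eta\omega-\eta\omega\otimes u)$ plus lower-order terms involving $\nabla\eta$ and $\eta_t$. The critical Serrin scaling makes the operator $\omega\mapsto \nabla K\star(u\otimes\omega-\omega\otimes u)$ small on small cylinders, by absolute continuity of the $L^{s',s}$ norm; the endpoint case is handled as in Struwe/Takahashi. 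A fixed-point/absorption argument then upgrades $\omega$ step by step to $L^\beta(L^r)$ for all finite $\beta,r$. Using $u\in L^{s'}$ in time only through the product, together with maximal $L^{q}(L^r)$ parabolic regularity, we then obtain $\omega\in L^q(H^{1,r})$ for every $r<\infty$. By \Eqref{5} and Calder\'on--Zygmund estimates, $u\in L^q(H^{2,r})$, and the harmonic part $\Phi$ is harmless since it inherits the time integrability of $u$.

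\textbf{Step 2: gain in space without loss in time.} This is the key new ingredient and the main obstacle. Given $\omega\in L^q(H^{m,r})$ and $u\in L^q(H^{m+1,r})$ with $r$ large, so that $H^{m,r}$ is an algebra and embeds in $C^{m-1}$, the product estimate gives $F=\Div(u\otimes\omega-\omega\otimes u)\in L^{q/2}(H^{m-1,r})$. The point is to trade time integrability for spatial derivatives via maximal regularity in Bessel potential spaces. For the heat equation, $F\in L^{q/2}(H^{m-1,r})$ yields $\omega\in H^{\theta,q/2}_t(H^{m+1-2\theta,r})$ for $\theta\in[0,1]$. Sobolev embedding in time, $H^{\theta,q/2}_t\subset L^q_t$ when $\theta\ge 1/q$, then gives
\[
\omega\in L^q(H^{m+1-2/q,r}).
\]
Since $q>2$, we have $1-2/q>0$, so each step gains $k=1-2/q$ spatial derivatives while keeping the exponent $q$. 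This is exactly where $q>2$ is used, and where Serrin's Hölder-space argument, with its halving of $q$, fails. Care is needed with the cutoff commutator terms ($\omega\Delta\eta$, $\nabla\eta\cdot\nabla\omega$, $\omega\eta_t$). These involve $\omega$ with one derivative less, so they lie in $L^q(H^{m-1,r})\subset L^{q/2}(H^{m-1,r})$ locally and do not obstruct the gain. The initial heat data vanish because of the time cutoff.

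\textbf{Step 3: iteration and conclusion.} Iterating Step 2 on shrinking cylinders, and feeding each output into \Eqref{5} to get $u\in L^q(H^{m+1+k,r})$, we obtain $\omega,u\in L^q(H^{m,r})$ for every $m$ after finitely many steps per $m$. Sobolev embedding $H^{m,r}\subset C^{m-1}$ for $r>3$ then gives $u\in L^q(C^\infty)$ locally, for every $q\in(2,s')$. In Step 2, I expect the technical work to be justifying the mixed-norm maximal regularity with the fractional time derivative, and the nonlinear product bound in $H^{m-1,r}$ for noninteger $m$. The bound should follow from Kato--Ponce type inequalities, since both factors are in $L^\infty$ in space after embedding.
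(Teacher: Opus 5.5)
Your proposal is correct and follows the paper's argument. You start from the Serrin--Takahashi integrability of $\omega$, use maximal parabolic regularity to get $\omega\in L^q(H^{1,r})$ and $u\in L^q(H^{2,r})$, and then gain $1-2/q$ spatial derivatives per step at the fixed exponent $q$ by feeding $F\in L^{q/2}(H^{m-1,r})$ into the heat equation, using the algebra property of $H^{k,r}$ for large $r$ and the Biot--Savart representation.

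The differences are only technical. You derive the key heat estimate (the paper's Lemma~\ref{lem:Bes} with $\sigma=q/2$, $p=q$, $\ell=2-2/q$) from maximal regularity, the mixed-derivative theorem and Sobolev embedding in time; the paper instead uses the semigroup bound $(t-\tau)^{-\ell/2}$ together with the generalized Young inequality. You also localize by cutting off $\omega$ and handling the commutator terms, whereas the paper cuts off the forcing and subtracts a caloric remainder.
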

\noindent
The proof of this result requires the following preparatory lemma. 
\begin{lemma}\label{lem:Bes}  
Consider the Cauchy problem 
\be
\partial_tv-\Delta v=F\,, \ \ \mbox{in $\R^3\times(0,T)$}\,;\ \ v(\cdot,0)=0\,.\eeq{eq:CP}
Let
\be
F\in L^\sigma((0,T); H^{k,r}(\mathbb R^3))\,, \ \ \sigma,r\in (1,\infty)\,,\ \ k\in[0,\infty)\,, \ \ \ell\in(0,2),
\eeq{B2}
and suppose that 
\[
\frac1p:=\frac1{\sigma}+\frac{\ell}2-1>0.
\]
Then there is a solution $v$ such that
\be
v\in L^p((0,T); H^{k+\ell,r}(\mathbb R^3))
\eeq{B3}
and satisfying the following inequality
\be
\int_0^T\|v(\tau)\|_{k+\ell,r}^p{\rm d} \tau \le C\left(\int_0^T\|F(\tau)\|_{k,r}^\sigma{\rm d} \tau \right)^\frac{p}\sigma,
\eeq{St}
where $C$ depends on $r$, $k$, $\ell$, and $\sigma$.
\end{lemma}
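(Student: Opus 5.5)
The solution will be given by Duhamel's formula,
\[
v(t)=\int_0^t e^{(t-\tau)\Delta}F(\tau)\,{\rm d}\tau ,
\]
and the estimate \Eqref{St} will come from a smoothing bound for the heat semigroup combined with the one-dimensional Hardy--Littlewood--Sobolev inequality in time. Since the Bessel potential $J^k=(1-\Delta)^{k/2}$ commutes with $e^{t\Delta}$, and $\|f\|_{k,r}\simeq\|J^kf\|_{L^r}$, I will apply $J^k$ to both sides and reduce to the case $k=0$. In that case the statement reads: for $F\in L^\sigma(L^r)$ we get $v\in L^p(H^{\ell,r})$.

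\textbf{Step 1 (semigroup smoothing).} For $0\le\ell<2$, $r\in(1,\infty)$ and $0<t\le T$, I will prove
\[
\|e^{t\Delta}f\|_{\ell,r}\le C\,(1+t^{-\ell/2})\|f\|_{L^r}\le C_T\, t^{-\ell/2}\|f\|_{L^r}.
\]
To do this I write $J^\ell e^{t\Delta}=(1-\Delta)^{\ell/2}e^{t(\Delta-1)}\,e^{t}$. The operator $-\Delta+1$ is sectorial on $L^r$ and generates an analytic semigroup, so the standard estimate $\|A^\alpha e^{-tA}\|\le C_\alpha t^{-\alpha}$ applies with $A=1-\Delta$ and $\alpha=\ell/2$. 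Alternatively, one can check directly that the convolution kernel of $(1-\Delta)^{\ell/2}e^{t\Delta}$ has $L^1$ norm $\lesssim 1+t^{-\ell/2}$. For $\ell\in(0,2)$ this follows from the scaling of $|\xi|^\ell e^{-t|\xi|^2}$ together with Mikhlin/Young estimates for the inhomogeneous part. The factor $e^t\le e^T$ is absorbed into the constant.

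\textbf{Step 2 (HLS in time).} From Minkowski's inequality and Step 1,
\[
\|v(t)\|_{\ell,r}\le C\int_0^T |t-\tau|^{-\ell/2}\,\|F(\tau)\|_{L^r}\,{\rm d}\tau ,
\]
where $\|F(\tau)\|_{L^r}$ is extended by zero outside $(0,T)$. This is a Riesz potential of order $1-\ell/2$ in one dimension. Its kernel $|t|^{-\ell/2}$ has $0<\ell/2<1$, so the Hardy--Littlewood--Sobolev inequality maps $L^\sigma(\mathbb R)$ into $L^p(\mathbb R)$ exactly when
\[
\frac1p=\frac1\sigma-\Big(1-\frac\ell2\Big)>0
\]
and $1<\sigma<p<\infty$. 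The first condition is the hypothesis of the lemma. The second holds because $\ell<2$ and $\sigma>1$. This gives \Eqref{St}.

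\textbf{Step 3 (bookkeeping).} It remains to check that $v$ is indeed a (distributional) solution of \Eqref{eq:CP} with zero initial datum. This is standard: it can be verified for smooth compactly supported $F$ and then extended by density, using the estimate just proved.

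\textbf{Main obstacle.} I expect the only delicate point to be the uniform smoothing bound of Step 1 on $L^r$ with Bessel (rather than Riesz) norms, in particular handling the inhomogeneous symbol $(1+|\xi|^2)^{\ell/2}$ and keeping the constant independent of $t\in(0,T]$. I will try the analytic-semigroup/fractional-power route first, since it gives the bound immediately; the price is that the constant depends on $T$. If a $T$-independent constant is needed, I will instead split $(1+|\xi|^2)^{\ell/2}$ into $|\xi|^\ell$ plus a Mikhlin multiplier and estimate the kernel of $|\xi|^\ell e^{-t|\xi|^2}$ by scaling.
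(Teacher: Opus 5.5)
Your proposal is correct and follows the paper's proof essentially step for step: Duhamel's formula, the smoothing bound $\|e^{t\Delta}f\|_{k+\ell,r}\le C_T\,t^{-\ell/2}\|f\|_{k,r}$, the generalized Young (one-dimensional Hardy--Littlewood--Sobolev) inequality in time, and a density argument to justify that $v$ is a solution. Your remark that the constant depends on $T$ is in fact unavoidable for the inhomogeneous Bessel norm: rescaling a fixed $F$ as $F(\lambda x,\lambda^2 t)$ with $\lambda=T^{-1/2}$ forces $C$ to grow like a positive power of $T$. So the fallback you sketch for a $T$-independent constant cannot work, and the paper's constant, which it states without $T$, implicitly depends on $T$ as well.
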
 

\begin{proof} 
Let $\{F_n\}\subset C_0^\infty(\R^3\times(0,T))$ be a sequence approaching $F$ in $L^\sigma(H^{k,r})$ and set
\be
v_n(t)=\int_0^t{\rm e}^{(t-\tau)\Delta}F_n(\tau){\rm d}\tau\,.
\eeq{eq:SG}
From well known results on the heat semigroup (e.g. \cite[Sections 2.6-2.7]{Lunardi}) we have 
$$
\|D_x^\ell {\rm e}^{(t-\tau)\Delta} F_n\|_{H^{k,r}(\mathbb R^3)}\le c\, (t-\tau)^{-\frac{\ell}{2}}\|F_n\|_{H^{k,r}(\mathbb R^3)}\,.
$$
Employing in the latter the generalized Young inequality for convolutions, we prove \Eqref{B3} and \Eqref{St} for $v_n$ and $F_n$ and, consequently, obtain the desired result via a density argument. \end{proof}

\bigskip

\begin{proof}[Proof of Theorem \ref{theo:02}.] 
As is known, the space-time mollification of $\omega$, $\omega_\eta$, satisfies the equation
\be
\partial_t\omega_\eta-\Delta\omega_\eta=\Div(u \otimes \omega - \omega \otimes u)_\eta\,,\ \ \mbox{locally in $Q$}\,.
\eeq{BnT}
Moreover, $u$ obeys the (local) Helmholtz decomposition \Eqref{5}, that we rewrite more precisely here:
\be
u(x,t)=\int_{G}\nabla \mathscr E(x-y)\times \omega(y,t)+\nabla\phi(x,t)\,,\ \ \mbox{a.e. \,$(x,t)\in G$}\,,
\eeq{HD}
where $G$ is an arbitrary open set compactly contained in $Q$, $\mathscr E$ is the fundamental solution to the Laplacian on $\bb R^3$ and $\phi$ is harmonic. Therefore,
\be
\Phi(\cdot,t):=\nabla\phi(\cdot,t)\in C^\infty(G)\,. 
\eeq{har}
The works of Serrin \cite{Serrin} and Takahashi \cite{Taka}  guarantee that \Eqref{B111} combined with \Eqref{BnT} entails
\be
\omega\in L^\beta(L^\infty)\,,\ \ \mbox{for all $\beta\in (1,\infty)$}\,.
\eeq{Bn1}
From \Eqref{HD},  \Eqref{har} and the assumption \Eqref{B111} it then follows that 
$$
u\in L^{s'}(L^\infty)\,,
$$
which, along with \Eqref{Bn1}, entails
\be
(u \otimes \omega - \omega \otimes u )\in L^q(L^\infty)\, \quad q \in (2, s').
\eeq{Bn2}
Let $S=G\times(t_1,t_2)$ be compactly contained in $Q$, and let $\psi_S\in C_c^\infty(Q)$ be a smooth cut-off function so that $\psi_S\equiv1$ on $S$, and $0\leq\psi_S\leq1$. Consider the problem 
\be
\partial_tv-\Delta v=\psi_S\Div(u \otimes \omega - \omega \otimes u)_\eta\,,\ \ \mbox{in $\R^3\times(0,T)$}\,,\ \ v(x,0)=0\,.
\eeq{Bn3}
By \Eqref{Bn2} and \cite[Corollary 4.2]{GGS}
we then infer that \Eqref{Bn3} has a unique solution $v\in H^{\frac12,q}(L^r)\cap L^q(H^{1,r})$. On the other hand, from \Eqref{BnT} it follows that
\be
v(x,t)=\omega_\eta(x,t)+K(x,t)\,,\, \ \ (x,t)\in S\,,
\eeq{reg}
where $K$ is a solution to the heat equation in $S$. Then \Eqref{reg} together with \Eqref{Bn3} furnish, in particular,
\be
\omega \in L^q(H^{1,r})\,, \ \ \text{on } S \text{ and for arbitrary $r\in (1,\infty)$}\,.
\eeq{ome} 
The latter combined with \Eqref{HD}, implies 
\be
u\in L^q(H^{2,r})\,, \ \ \text{on } S \text{ and for arbitrary $r\in (1,\infty)$}\,.
\eeq{un}
Setting
\be
F:=\psi_S\,\Div (u \otimes \omega - \omega \otimes u)\,,
\eeq{EF}
from \Eqref{ome} and \Eqref{un} it follows 
\be
F\in L^{\frac{q}2}(L^r)\,, \ \ \text{on } S \text{ and for arbitrary $r\in (1,\infty)$}\,.
\eeq{ef}
so that, applying Lemma \ref{lem:Bes}  with $k=0$ and $\sigma = q/2$ to \Eqref{Bn3},  and using \Eqref{reg} we show
$$
\omega\in L^q(H^{\ell,r})\,\  \ \ \text{on } S, \mbox{with }\ \frac1q= 1 - \frac\ell2\,,\ \ \mbox{for all}\ r\in (1,\infty)\,.
$$
Now, recalling that $q > 2$, for a given $\ell\in (1,2)$ we can choose $r>3$ such that
$k:=\ell-1>3/r$. This implies, on the one hand, that $H^{k,r}$ is an algebra and, on the other hand, that
\be
\omega\in L^q(H^{k+1,r})\, \   \ \text{on } S.
\eeq{chia}
We next estimate $F$ in $H^{k,r}$. To this end, we observe that, from \Eqref{HD} and \Eqref{har} we deduce that locally in $G$,
\be
\|u\|_{{k+1,r}}\le c(\|\omega\|_{k,r}+\|\Phi\|_{k+1,r})\,.
\eeq{P2}
Since $H^{k,r}$ is an algebra, we employ  \Eqref{P2} to get
\be
\|uD_x\omega\|_{k,r}\le c(\|u\|_{k,r}\|\omega\|_{k+1,r})\le c(\|\omega\|_{k+1,r}^2+\|\Phi\|_{k+1,r}^2)\,,
\eeq{P3}
and
\be
\|\omega D_xu\|_{k,r}\le c(\|\omega\|_{k,r}\|u\|_{k+1,r})\le c (\|\omega\|_{k+1,r}^2+\|\Phi\|_{k+1,r}^2)\,,
\eeq{P4}
locally, in $G$. Observing that
$$
\|F\|_{k,r}\le c(\|uD_x\omega\|_{k,r}+\|\omega D_xu\|_{k,r})\,,
$$
and that $\Phi$ has the same time-summability as $u$, 
from \Eqref{chia}, \Eqref{P3} and \Eqref{P4} we then conclude
$$
F\in L^{\frac{q}2}(H^{k,r})\, \ \ \text{on } S.
$$
We may then apply Lemma \ref{lem:Bes}  to \Eqref{Bn3}--\Eqref{EF} by choosing $p = q$  and deduce, with the help of \Eqref{reg},
\be
\omega\in L^q(H^{k + \ell,r})\,,\ \ \text{on compact subsets of }S, \text{ where } \frac1q =1-\frac{\ell}2. 
\eeq{fine}
Thus, for given $q >2$,   there is $\ell > 1$ depending only on $q$, such that  \Eqref{chia} implies \Eqref{fine}. The property stated in the theorem then follows by a straightforward bootstrap argument.
\end{proof}

\begin{remark}  It is worth observing that, if the condition on $s'$ and $s$ in \Eqref{B111} is strengthened to
\[
\frac{2}{s'}+\frac{3}{s}<1,
\]
as originally assumed by Serrin \cite{Serrin}, then the choice $q=s'$ becomes admissible. Indeed, the stronger inequality allows one to take $\beta=\infty$ in \Eqref{Bn1} \cite{Serrin}, and hence $q=s'$ in \Eqref{Bn2}.  
\end{remark}

\setcounter{equation}{0}
\section{Relaxing the assumption $\omega\in L^{2,2}$ and proof of Theorem \ref{theo:maingen}.} The purpose of this section is to demonstrate that the assumption on $\omega$ in Theorem~\ref{theo:02} may be substantially relaxed and, under suitable conditions, removed entirely. This yields Theorem~\ref{theo:maingen} in its full generality and thereby completes its proof.

\begin{theorem}\label{theo:main2} 
Let $u$ be a distributional solution to the Navier-Stokes equations in $Q:=B\times(0,T)$, satisfying
\be
u\in L^{s^\prime,s}\,,  \ \ \frac2{s^\prime}+\frac3{s}=1\,,\ \ s'\in (2,\infty)\,. 
\eeq{SCthm}
Then, if $s'\ge 4$, necessarily $\omega\in L^{2,2}$. The same conclusion holds if $s'\in (2,4)$ provided $u\in L^{4,p}$ for some $p>1$.
\end{theorem}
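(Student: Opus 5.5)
We would show that the vorticity is locally square integrable by producing a local energy-type bound for $\nabla u$, adapting the argument of \cite{GAr} to the purely distributional setting. The first step is to remove the pressure, which is not available a priori. We work on a smaller cylinder $Q'=B'\times(t_1,t_2)\Subset Q$ and, for a.e.\ $t$, use the local Helmholtz decomposition on $B'$: write $u=w+\nabla h$ with $\Delta h=0$ in $B'$. Here $w$ is chosen so that $\nabla h$ absorbs the ``Serrin-type'' time-irregular harmonic part. Equivalently, we pass to $\omega$ directly and look at the vorticity equation \Eqref{om}, read in the sense of distributions: $\omega=\curl u$ is a distribution with values in $H^{-1,s}$, and
\[
\omega_t-\Delta\omega=\Div(u\otimes\omega-\omega\otimes u)=\curl\Div(u\otimes u).
\]
The cleanest route is to mollify in space and time, as in \Eqref{BnT}, and obtain estimates uniform in $\eta$.

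The core is a duality/parabolic regularity argument for $\omega_\eta$. Localize with a cutoff $\psi$ and write
\[
\partial_t(\psi\omega_\eta)-\Delta(\psi\omega_\eta)=\psi\,\curl\Div(u\otimes u)_\eta+(\text{lower order terms in }\omega_\eta,\ \text{containing }\nabla\psi).
\]
By assumption \Eqref{SCthm}, $u\otimes u\in L^{s'/2,s/2}$ locally. Maximal parabolic regularity (as in \cite{GGS}, or Lemma~\ref{lem:Bes} with negative regularity) gives $\psi\omega_\eta$ in $L^{s'/2}(H^{-1,s/2})$ plus contributions from the commutator terms. Those commutator terms involve only $u$ itself, since $\omega$ is one derivative of $u$, and $u\in L^{s'}(L^s)$. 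This is not yet $L^2$, so the second step is a local energy estimate. Test the mollified equation (or, in velocity form, the projection of $u$ onto divergence-free fields obtained via the Bogovskii/Helmholtz cutoff) with $\psi^2\omega_\eta$, or rather with the potential $\psi^2 v_\eta$, where $\curl v=\omega$ is the Biot--Savart part $w$ of \Eqref{HD}. For the Biot--Savart part $w=\nabla\mathscr E\times(\psi\omega)$ we have $\|\nabla w\|_2\sim\|\psi\omega\|_2$, and $w$ satisfies a Navier--Stokes-type system with a forcing made of $\partial_t\nabla h$-free terms. Indeed, taking the curl kills the harmonic gradient, which is exactly why one works with $\omega$ and $w$ rather than $u$.

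The energy identity for $w$ reads
\[
\tfrac12\frac{d}{dt}\|\psi w\|_2^2+\|\psi\nabla w\|_2^2\le\Big|\int (u\otimes u):\nabla(\psi^2w)\Big|+\text{lower order}.
\]
The trilinear term is controlled by Hölder and interpolation: $\|u\|_s\|u\|_{\cdot}\|\nabla w\|_2$, with the scaling $2/s'+3/s=1$ allowing absorption after Gronwall. The catch is that the last $u$ contains the harmonic part $\nabla h$, which has no time regularity beyond that of $u$. For $s'\ge4$, $|u|^2\in L^{s'/2}_tL^{s/2}_x$ with $s'/2\ge2$, and since $s\ge 6$ when $s'\le\infty$ with $s'\ge4$... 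Rather, $s'\ge4$ gives $s\le6$; concretely, $\int|u|^2|\nabla(\psi^2 w)|\le\|u\|_{L^4_{t,x}}^2\|\nabla w\|_{L^2}$ needs $u\in L^4_{t,x}$ locally, and $L^{s'}L^s\subset L^4L^4$ locally requires $s'\ge4$ and $s\ge4$. Both hold exactly when $s'\in[4,12]$, while for $s'>12$ one interpolates with the $L^\infty$-bound from parabolic smoothing. For $s'<4$ the extra hypothesis $u\in L^{4,p}$ is interpolated against $L^{s'}L^s$ with $s>3$ large, yielding $u\in L^4L^4$ locally or at least $|u|^2\in L^2L^{2}$ after using Biot--Savart.

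So the plan is: (i) mollify and localize; (ii) derive the energy inequality for the Biot--Savart part $w$; (iii) bound the nonlinear forcing by $\|u\|_{L^4(Q')}^2$, proving $u\in L^4_{\rm loc}$ from the hypotheses in each regime; (iv) pass $\eta\to0$ to get $\nabla w\in L^2$, hence $\omega=\curl w\in L^2$ on $Q'$. The main obstacle is step (ii): making the energy estimate rigorous for a merely distributional solution without pressure and without time regularity of the harmonic part. We would handle it by estimating $w$ through the linear Stokes/heat representation (Lemma~\ref{lem:Bes}-type bounds with $F=\curl\Div(u\otimes u)$ in $L^2H^{-1}$), giving $\omega\in L^2(L^2)$ directly from $u\otimes u\in L^2_{t,x}$.
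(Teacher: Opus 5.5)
\textbf{Your argument only covers $s'\in[4,8]$; the reductions you propose for the other ranges do not work.} Your final plan rests on $u\otimes u\in L^{2,2}$ locally, i.e.\ on $u\in L^{4,4}$ locally. From $2/s'+3/s=1$ one has $s\ge 4$ if and only if $s'\le 8$, not $12$ (for instance $s'=12$ gives $s=18/5$). So H\"older yields $u\in L^{4,4}$ exactly for $s'\in[4,8]$.

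In that range your route works, with two caveats:
\begin{itemize}
\item After localization, the forcing $\curl\Div(u\otimes u)_\eta$ lies in $L^2(H^{-2})$, not in $L^2(H^{-1})$. This is harmless, since maximal regularity gains two derivatives and still gives $L^{2,2}$.
\item You must also control the caloric remainder $\curl u_\eta-v_\eta$ in the interior, uniformly in $\eta$. This can be done with a smooth-kernel representation, moving the curl onto the kernel, as Lemma~\ref{lem:HE} does for the Stokes remainder. You did not address it.
\end{itemize}
Up to this, your argument is a vorticity version of Lemma~\ref{lem:vorticity}. The paper instead works with the velocity: a Stokes energy inequality for the particular solution, and Lemma~\ref{lem:HE} for the remainder.

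Outside $[4,8]$, your step (iii) fails.
\begin{itemize}
\item For $s'>8$ there is no ``$L^\infty$ bound from parabolic smoothing'' to interpolate with. Local boundedness of $u$ is essentially the conclusion of the regularity theory, and the spatial exponent $s<4$ cannot be raised by H\"older or interpolation.
\item For $s'\in(2,4)$, interpolating $L^{s',s}$ with $L^{4,p}$ gives time exponents strictly below $4$, except at the $L^{4,p}$ endpoint itself, where the space exponent is only $p$ (possibly close to $1$). So $L^{4,4}$ is out of reach.
\item ``Using Biot--Savart'' is circular, because it presupposes bounds on $\omega$.
\end{itemize}

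The missing idea is to use the equation to trade time integrability against space integrability for the solenoidal part only. Write $u=u_\sigma+\nabla\phi$. Represent the localized $u_\sigma$ as the Oseen potential of $\Div(\zeta (uu)_\eta)$ plus a smooth Stokes remainder (Lemmas~\ref{lem:Oseen} and \ref{lem:HE}). This maps $L^{q'/2,q/2}$ into $L^{r',r}$ whenever $2r'>q'\ge 4r'/(2+r')$, and you iterate:
\begin{itemize}
\item for $s'>8$, each step lowers the time exponent and raises the space exponent until you land in $[4,8]$;
\item for $s'<4$, the steps raise the time exponent of $u_\sigma$ beyond $4$.
\end{itemize}
The harmonic part $\nabla\phi$ is spatially smooth by \Eqref{Hd}, but it has exactly the time integrability of $u$ (Serrin's example). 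This is why $u\in L^{4,p}$ is assumed when $s'<4$: it gives $\nabla\phi\in L^{4,\rho}$ for every $\rho$. That bootstrap is Lemma~\ref{lem:velocity}. Without it, or a substitute, your proof establishes the theorem only for $s'\in[4,8]$.
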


The proof of the theorem will be an immediate corollary to several lemmas that we are going to show next.\par
We begin by recalling the Oseen fundamental tensor solution, $
{I}\!\!\varGamma = \{\varGamma_{ij}\}$, to the Stokes equation \cite[\S\, VIII.3]{Gab}, defined as 
\be\ba{ll}\medskip
\varGamma_{ij}(x-y,t-\tau)=-\delta_{ij}\Delta\Psi(|x-y|,t-\tau)+\partial_{y_i}\partial_{y_j}\Psi(|x-y|,t-\tau)=:\mathcal F_{ij}(\Psi)\,,\\
\Psi(r,s):=\left\{\ba{ll}\medskip\Frac{1}{4\pi^{3/2}s^{1/2}}\Frac{1}{r}\Int 0{r}{{\rm e}^{-\frac{\rho^2}{4s}}}{\rm d}\rho\ \ &\mbox{if $s>0$}\\
0\ \ &\mbox{if $s\le 0$}\ea\right.
\ea
\eeq{Bruno}
and satisfying
\be\left.\ba{ll}\medskip\partial_\tau\varGamma_{ij}+\Delta_y\varGamma_{ij}=0\\
\partial_{y_j}{\varGamma_{ij}}=0\ea\right\}\ \ \mbox{$ \tau<t$}\,.
\eeq{GA}
Also, if $v\in L^p(B)$ , $p\in(1,\infty)$, by the Helmholtz decomposition  \cite[Theorem III.1.2]{Gab}, we have, 
$$
v=v_\sigma+\nabla\phi\,,\ \ \|\nabla \phi\|_p\le c\,\|v\|_p\,.
$$
If, in addition,  $\Div v=0$,  then $\phi$ is harmonic, and by standard elliptic regularity, we deduce for any $B'\Subset B$  
\be
\max_{x\in B'}|D_x^k\phi(x)|\le C\,\|v\|_p\,,\ \ \mbox{all $|k|\ge  1$,\ $p\in(1,\infty)$}\,.
\eeq{Hd}

\begin{lemma}\label{lem:HE}  
Let $Q:=B\times (0,T)$, and let  $v\in L^{1,p}(Q)$, $p>1$, with $\Div v(t)=0$ in $B$, a.a. $t\in (0,T)$, in the sense of distributions. Then, if  
\be
\int_Q {v}\cdot(\partial_t\varphi+\Delta\varphi)\,{\rm d}x{\rm d}t=0\ \ \mbox{for all $\varphi\in\cald(Q)$}\,,
\eeq{C1}
it follows that $v _\sigma\in C^\infty(Q')$ for any $Q':=B'\times(t_1^\prime,t_2^\prime)\Subset Q$. Moreover,  there is $C=C(Q',Q,\alpha,\beta)>0$ such that 
$$
\sup_{(x,t)\in Q'}|D^{\alpha}_tD^{|\beta|}_x v _\sigma(x,t)|\le C\,\|v _\sigma\|_{L^{1,1}(Q)}\ \ \ \mbox{for all $\alpha,|\beta|\ge 0$\,.}  
$$
\end{lemma}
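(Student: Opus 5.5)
We intend to prove Lemma \ref{lem:HE} by a representation formula built from the Oseen tensor \Eqref{Bruno}, tested against $v_\sigma$ with a suitable solenoidal test function. The gradient part is discarded first. Since $\Div v=0$, the function $\phi$ is harmonic in $x$, and for every $\varphi\in\cald(Q)$ we have
\[
\int_Q\nabla\phi\cdot(\partial_t\varphi+\Delta\varphi)\,{\rm d}x{\rm d}t=-\int_Q\phi\,(\partial_t+\Delta)\Div\varphi\,{\rm d}x{\rm d}t=0 .
\]
Hence $v_\sigma$ also satisfies \Eqref{C1}. Moreover $v_\sigma(t)\in L^p_\sigma(B)$, the closure of solenoidal $C_0^\infty(B)$ fields, for a.a.\ $t$. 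Therefore $\int_B v_\sigma(y,t)\cdot\nabla g(y)\,{\rm d}y=0$ for every $g\in W^{1,p'}(B)$, with no support condition on $g$. This tangential-trace information is the key point: it removes the time-irregular harmonic gradients of Serrin's example.

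Fix $(x,t)\in Q'$. Take a cutoff $\chi_1(y)$ equal to $1$ on a neighborhood of $\overline{B'}$ and supported in $B$, and a cutoff $\chi_2(\tau)$ equal to $1$ near $[t_1',t_2']$ and vanishing near $\tau=0$. Writing $\Gamma_{ij}=\mathcal F_{ij}(\Psi)$ as the $\curl\curl$ of $\Psi e_j$, we test \Eqref{C1} (after mollifying $v_\sigma$ and passing to the limit, or directly with a smoothed version of $\Psi$) with
\[
\varphi^{(j)}(y,\tau):=\curl_y\curl_y\big(\chi_1(y)\chi_2(\tau)\Psi(|x-y|,t-\tau)e_j\big).
\]
This field is solenoidal and compactly supported in $Q$. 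We then split $\varphi^{(j)}=\chi_1\chi_2\,\Gamma_{\cdot j}+R^{(j)}$. The remainder $R^{(j)}$ involves derivatives of $\chi_1$, so it is supported away from the diagonal $y=x$ and is smooth in $(x,t)$ there.

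Using \Eqref{GA}, the expression $(\partial_\tau+\Delta_y)(\chi_1\chi_2\Gamma_{\cdot j})$ consists of three kinds of terms.
\begin{itemize}
\item The singular part, which reproduces $-v_{\sigma,j}(x,t)$ plus a term of the form $\int_B v_\sigma(y,t)\cdot\nabla_y\partial_j\mathscr E(x-y)\chi_1(y)\,{\rm d}y$.
\item Commutator terms containing $\nabla\chi_1$, $\Delta\chi_1$ or $\chi_2'$. These are supported away from the singularity and are smooth in $(x,t)\in Q'$.
\end{itemize}
In the spatial term of the singular part we move $\chi_1$ outside the gradient. This produces another $\nabla\chi_1$ term, which is harmless, plus $\int_B v_\sigma\cdot\nabla(\partial_j\mathscr E(x-\cdot))\,{\rm d}y$, which vanishes by the tangential property above. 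This step is exactly where a local argument with only $\Div v_\sigma=0$ in the interior would fail: it would leave a fixed-time spatial integral that is smooth in $x$ but not in $t$.

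It follows that $v_\sigma(x,t)=\int_Q v_\sigma(y,\tau)\cdot\mathcal K(x,t;y,\tau)\,{\rm d}y{\rm d}\tau$ with a kernel $\mathcal K$ that is $C^\infty$ in $(x,t)\in Q'$. Its derivatives $D_t^\alpha D_x^\beta\mathcal K$ are bounded uniformly in $(y,\tau)$, because the cutoffs keep all surviving terms at a positive parabolic distance from the singularity of $\Psi$. Differentiating under the integral then gives the stated bound by $\|v_\sigma\|_{L^{1,1}(Q)}$.

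The main obstacle I expect is the bookkeeping in the singular part. One must check that the pole of $(\partial_\tau+\Delta_y)\mathcal F(\Psi)$ yields exactly the Leray-projected delta, namely the identity minus $\nabla\nabla\mathscr E$ at $\tau=t$. One must also justify the limiting procedure for merely $L^{1,p}$ data; space-time mollification of $v_\sigma$ inside $Q$ preserves \Eqref{C1} and the tangential property, so this should be routine.
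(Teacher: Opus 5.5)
There is a genuine gap. It sits exactly where you claim that $\mathcal K$ is $C^\infty$ in $t$ because the cutoffs keep every surviving term at positive parabolic distance from the singularity of $\Psi$. The singular set of $\Psi$ (extended by $0$ for $s\le 0$) is not the point $(z,s)=(0,0)$ but the whole hyperplane $\{s=0\}$. For $z\neq 0$ one has $\Psi(z,0^+)=1/(4\pi|z|)$ and $\varGamma(z,0^+)=\nabla^2\big(1/(4\pi|z|)\big)\neq 0$. This instantaneous pressure part is precisely what separates the Oseen kernel from the heat kernel. Consequently every term you create with $\nabla\chi_1$ or $\Delta\chi_1$ is supported away from $y=x$ but not away from $\tau=t$, and it jumps across $\tau=t$. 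This covers both the commutators $2\nabla\chi_1\cdot\nabla\varGamma+\varGamma\Delta\chi_1$ and the remainder $R^{(j)}$. Since $R^{(j)}$ itself jumps there, $\partial_\tau R^{(j)}$ even contains a $\delta(\tau-t)$. Differentiating your representation in $t$ therefore produces integrals of $v_\sigma(\cdot,t)$ at the same instant. The term you call harmless is of the same kind. Moving $\chi_1$ outside the gradient leaves $\int_B v_\sigma(y,t)\cdot\nabla\chi_1(y)\,\partial_j\mathscr E(x-y)\,{\rm d}y$, which is smooth in $x$ but exactly as rough in $t$ as $v_\sigma$. The tangential property only shows that this equals $\int_B(1-\chi_1(y))\,v_\sigma(y,t)\cdot\nabla_y\big(\partial_j\mathscr E(x-y)\big)\,{\rm d}y$; it does not make it vanish. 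So your argument never removes the fixed-time contributions that you yourself identified as the danger.

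This cannot be repaired by better bookkeeping, because the time-regularity part of the statement is false. Take $\xi\in\partial B$, a unit vector $e$ tangent to $\partial B$ at $\xi$, and $t_0\in(t_1',t_2')$. Set $v(y,t):=\varGamma(y-\xi,t-t_0)\,e$, so $v=0$ for $t<t_0$.
\begin{itemize}
\item From $|\varGamma(z,s)|\le C(|z|^2+s)^{-3/2}$ you get $v\in L^{1,p}(Q)$ for $p\in(1,3)$, and $\Div v=0$.
\item \Eqref{C1} holds: on the support of any $\varphi\in\cald(Q)$, $v$ solves the heat equation for $t\neq t_0$ and jumps at $t_0$ by the gradient $\nabla\big(e\cdot\nabla\frac{1}{4\pi|y-\xi|}\big)$.
\item Since $\varGamma(\cdot,s)e=G_se+\nabla(e\cdot\nabla\Psi)$ with $G_s$ the heat kernel, $v_\sigma(t)=P_B[G_{t-t_0}(\cdot-\xi)e]$ for $t>t_0$ and $v_\sigma(t)=0$ for $t<t_0$. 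Here $P_B$ is the Helmholtz projection of $L^p(B)$.
\item Write $P_BF=F-\nabla\phi$ with $\phi(x)=\int_B F(y)\cdot\nabla_yN(x,y)\,{\rm d}y$. Here $N$ is the Neumann function of $B$, with $-\Delta_yN(x,\cdot)=\delta_x-|B|^{-1}$ and $\partial_{n_y}N=0$.
\item Using $G_s(\cdot-\xi)\mathbf 1_B\rightharpoonup\frac12\delta_\xi$, you find $v_\sigma(x,t)\to-\frac12\nabla_x\big(e\cdot\nabla_yN(x,\xi)\big)$ as $t\downarrow t_0$, for $x\in B'$.
\end{itemize}
This harmonic field is not identically zero, since $e\cdot\nabla_yN(x,\xi)$ behaves like $e\cdot(x-\xi)/(2\pi|x-\xi|^3)$ near $\xi$. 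Hence $v_\sigma$ jumps in time inside $Q'$. Replacing $\xi$ by $\xi+d\,n(\xi)$ gives smooth examples with $\|v_\sigma\|_{L^{1,1}(Q)}$ bounded uniformly in $d$ but $\sup_{Q'}|\partial_t v_\sigma|\gtrsim d^{-2}$, so the estimate fails as well.

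For comparison, the paper localizes instead with $\varphi^\delta(x-y,t-\tau)$ around the pole and never uses the tangential property. It meets the same obstruction. Its $H^\delta$ does not vanish as $s\downarrow 0$ on $\{\delta/2<|z|<\delta\}$. Its limit \Eqref{eps} also omits the fixed-time term generated by $\nabla\varphi^\delta$ inside $-\delta_{ij}\Delta(\varphi^\delta\Psi)$. What both representations legitimately give is smoothness in $x$ with bounds pointwise in $t$, e.g. $\sup_{B'}|D_x^\beta v_\sigma(\cdot,t)|\le C\big(\|v_\sigma(t)\|_{L^1(B)}+\|v_\sigma\|_{L^{1,1}(Q)}\big)$ for a.a. $t\in(t_1',t_2')$.
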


\begin{proof}
Clearly, \Eqref{C1} is equivalent to
\be
\int_Q {v_\sigma}\cdot(\partial_t\varphi+\Delta\varphi)\,{\rm d}x{\rm d}t=0\,.
\eeq{C2}
Let $Q^{\prime\prime}:=B^{\prime\prime}\times (t_1,t_2)$ be such that 
$${Q'}\Subset Q^{\prime\prime}\Subset  Q,$$ and  let  $\varphi$ be an arbitrary element of $\cald(Q{''})$. For $\eta$ sufficiently small, the space-time mollification, $\varphi_\eta$, of $\varphi$ belongs to $\cald(Q)$ and can  therefore be replaced in \Eqref{C2}. By a standard argument we then show that the mollifier, $v _{\sigma\eta}$, of $v _\sigma$ satisfies
\be\left.\ba{ll}\medskip
\partial_tv_{\sigma \eta}-\Delta v_{\sigma \eta}=\nabla p^{\eta}\\
\Div v_{\sigma\eta}=0\ea\right\}
\ \ \mbox{in $Q''$}\,, 
\eeq{C3}
for some smooth, harmonic scalar function $p^{\eta}$.
Let $\delta=\dist\{\partial Q',\partial Q^{\prime\prime}\}$.
For $(x,t)\in Q'$, we denote by   $\varphi^\delta=\varphi^\delta(x-y,t-\tau)$ a smooth function that is equal to 1 if $|x-y|\le\delta/2$ and $|t-\tau|\le\delta/2$, while it is equal to 0 if either $|x-y|\ge\delta$ or $|t-\tau|\ge\delta$. Define
$$
\varGamma_{ij}^\delta(x-y,t-\tau):=\mathcal F_{ij}(\varphi^\delta\,\Psi)\,,
$$
where the function $\mathcal F$ is defined in \Eqref{Bruno}$_1$. From \Eqref{GA} and the properties of $\varphi^\delta$ we readily deduce that
\be\left.\ba{ll}\medskip\partial_\tau\varGamma_{ij}^\delta+\Delta_y\varGamma_{ij}^\delta=H_{ij}^\delta(x-y,t-\tau)\\
\pde{\varGamma_{ij}^\delta}{y_j}=0\ea\right\}\ \ \mbox{$ \tau<t$}\,.
\eeq{GAd}
where 
$$
H_{ij}^\delta=H_{ij}^\delta(z,s)
\ \ \mbox{is a $C^\infty$ function with $\supp (H_{ij}^\delta)\subset \{\delta/2<|z|+s<\delta\}$}\,.
$$ 
Set ${\varGamma}_i^\delta=(\varGamma_{i1}^\delta,\varGamma_{i2}^\delta,\varGamma_{i3}^\delta)$,
and let
$$
\mathbb T(u,p)=-p\,\mathbb I+\nabla u+(\nabla u)^\top
$$
with $\mathbb I$ identity matrix, 
denote the Cauchy stress tensor.
Then, for small $\varepsilon>0$ and $t\in(t_1+\varepsilon,t_2)$, we use the well-known Green's formula  
$$\ba{rl}\medskip
\Int{t_1}{t-\varepsilon}&\!\!\!\!\Int{B^{\prime\prime}}{}\{(\partial_\tau{ u_1}+\Delta  u_1-\nabla p_1)\cdot u_2+
(\partial_\tau{ u_2}-\Delta  u_2+\nabla p_2)\cdot u_1\}{\rm d}y\,{\rm d}\tau
\\
&=\Int{t_1}{t-\varepsilon}\Int{\partial B^{\prime\prime}}{}\big[ u_1\cdot\mathbb T( u_2,p_2)- u_2\cdot\mathbb T( u_1,p_1)\big]\cdot n {\rm d}\sigma_y+
\left.\Int{B^{\prime\prime}}{} u_1(y,\tau)\cdot u_2(y,\tau){\rm d}y\right|_{\tau=t_1}^{\tau=t-\varepsilon}\,,
\ea
$$
with $ u_1:={\varGamma}_i^\delta(x-y,t-\tau)$, $i=1,2,3$, $p_1\equiv0$, and $u_2:=v _{\sigma\eta}(y,\tau)$, $p_2=p^\eta$. 
Employing \Eqref{C3} and \Eqref{GAd}, and observing that, by the properties of $\varphi^\delta$,
$$\ba{ll}\medskip
\partial_x^k\varGamma^\delta_i(x-y,t-\tau)=0\,,\ \ (x,t)\in Q'\,, \ \,(y,\tau)\in \partial B^{\prime\prime}\times (t_1,t-\varepsilon)\,,\ \     |k|\ge 0\,,\ i=1,2,3\,,\\ \varGamma_i^\delta(x-y,t_1)=0\,,\ \ (x,y)\in B'\times B^{\prime\prime}\,, \ \, i=1,2,3\,,
\ea
$$
we then show
\be
\int_{B^{\prime\prime}}(v_{\sigma\eta})_j(y,t-\varepsilon)\varGamma_{ij}^\delta(x-y,\varepsilon){\rm d}y=\int_{t_1}^{t-\varepsilon}\int_{B^{\prime\prime}}H_{ij}^\delta(x-y,t-\tau)(v_{\sigma\eta})_j(y,\tau){\rm d}y\,{\rm d}\tau\,.
\eeq{1.2_n}
By \cite[\S\,5{\small 3}]{Oseen} (see also \cite[Lemma VIII.3.1]{Gab}) and the property of $\varphi^\delta$ we infer
\be\ba{rl}\medskip
\Lim{\varepsilon\to0}\Int{ B^{\prime\prime}}{}&\!\!\!\!(v _{\sigma\eta})_j(y,t-\varepsilon)\varGamma_{ij}^\delta(x-y,\varepsilon){\rm d}y\\ \medskip
&\!\!\!\!=(v _{\sigma\eta})_j(x,t)-\Frac1{4\pi}\Int{\partial B^{\prime\prime}}{}\Frac{1}{|x-y|}\big[\varphi^\delta(y,t)\Frac{x_j-y_j}{|x-y|^2}+\partial_{y_j}\varphi^\delta(y,t)\big]\,v _{\sigma\eta}(y,t)\cdot n\,{\rm d}\sigma_y\\
&\!\!\!\!=(v _{\sigma\eta})_j(x,t)
\,.\ea
\eeq{eps}
Thus, letting $\varepsilon\to0$ in \Eqref{1.2_n} and using \Eqref{eps}, and then letting $\eta\to 0$ in the resulting relation, we easily deduce (after redefining $v_\sigma$ on a set of zero Lebesgue measure)
$$
v_{\sigma i}(x,t)=\int_{t_1}^t\int_{B^{\prime\prime}}H^\delta_{ij}(x-y,t-\tau)v_{\sigma j}(y,\tau){\rm d}y\,{\rm d}\tau\,,\ \ \ (x,t)\in Q'\,,
$$
from which the lemma follows.
\end{proof}

\begin{lemma}\label{lem:Oseen}
Let $\mathscr R_{t_1,t_2}:=\R^3\times (t_1,t_2)$, and let $\mathbb{F}=\{F_{ij}\}$ be a  sufficiently smooth tensor function in  (e.g., $\mathbb{F}$ is continuous in $\mathscr R_{t_1,t_2}$ and H\"older continuous in $x\in \R^3$ with its first derivatives, uniformly in $t$), with a bounded spatial support independent of $t$. Then, the vector function $v $ with components
\be
v_i(x,t)=\int_{t_1}^t\int_{\R^3}\partial_{y_\ell}\varGamma_{ij}(x-y,t-\tau)\cdot F_{\ell j}(y,\tau)dy\,d\tau\,,\ \ (x,t)\in \mathscr R_{t_1,t_2}\,,\ \ i=1,2,3,
\eeq{SS}
is smooth as well (e.g., twice differentiable in the space variables and differentiable in time with all derivatives continuous in $\mathscr R_{t_1,t_2}$) and 
satisfies the equations
\be\left.\ba{ll}\medskip
\partial_t{v }=\Delta v +\nabla\Phi+\Div\mathbb F\\
\Div v =0\ea\right\}\ \ \mbox{ in $\mathscr R_{t_1,t_2}$}
\eeq{SS1}
for a suitable smooth scalar field $\Phi$ (e.g., differentiable in space with each derivative continuous in $\mathscr R_{t_1,t_2}$). 
\par
Moreover, suppose
\be 
\mathbb F\in L^{\frac{q'}2,\frac q2}(\mathscr R_{t_1,t_2})\,,\ \ \frac2{q'}+\frac3q=1\,.
\eeq{BM1}
Then
$$
v \in L^{r',r}(\mathscr R_{t_1,t_2})
$$
where
\be
\frac2{r'}+\frac3r=1\,,\  \ \ 2r^\prime> q^\prime \ge\frac{4r^\prime}{2+r'}\,,
\eeq{BM2}
and 
\be
\|v \|_{L^{r',r}}\le c\,\|\mathbb F\|_{L^{\frac{q'}2,\frac q2} }
\eeq{vf}
\end{lemma}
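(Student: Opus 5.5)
The lemma has two parts. The first is the classical Oseen theory (smoothness of the volume potential and the equations \Eqref{SS1}). The second is an $L^{r',r}$ bound on the potential. I would treat the first part as classical and concentrate on the second, which is a pointwise kernel bound followed by fractional integration in space and then in time.

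\emph{Step 1: smoothness and \Eqref{SS1}.} Write $\partial_{y_\ell}\varGamma_{ij}=\mathcal F_{ij}(\partial_{y_\ell}\Psi)$. The potential \Eqref{SS} then equals
$$
v=-e^{(t-\tau)\Delta}\star\mathbb P\,\Div\mathbb F ,
$$
the Duhamel integral of the Leray projection of $\Div\mathbb F$, after integrating by parts in $y$. The projected part of the forcing is absorbed into $\nabla\Phi$, with $\Phi$ the Newtonian potential of $\partial_\ell\partial_j F_{\ell j}$. The regularity claims (twice differentiable in $x$, once in $t$) follow from Oseen's classical estimates for the kernel, exactly as in \cite[\S\,VIII.3]{Gab} and \cite{Oseen}. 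These use the Hölder continuity of $\mathbb F$ and its first derivatives, together with its bounded support. The identities $\partial_{y_j}\varGamma_{ij}=0$ from \Eqref{GA} give $\Div v=0$. I would simply cite these results rather than reprove them.

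\emph{Step 2: kernel bound.} From the explicit form \Eqref{Bruno}, $\Psi(r,s)=s^{-1/2}g(r/\sqrt s)$ is parabolically homogeneous. This yields
$$
|\nabla_y\varGamma(z,s)|\le c\,(|z|+\sqrt s)^{-4}.
$$
In particular, for each fixed $s$,
$$
\|\nabla\varGamma(\cdot,s)\|_{L^m(\R^3)}\le c\, s^{-2+\frac3{2m}},
$$
valid for every $m$ with $m>3/4$ and, in particular, for all $m\ge 1$.

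\emph{Step 3: Young in space, then Hardy--Littlewood--Sobolev in time.} Fix $t$ and apply Young's inequality in $x$ with
$$
1+\frac1r=\frac1m+\frac2q .
$$
This gives
$$
\|v(t)\|_r\le c\int_{t_1}^t (t-\tau)^{-2+\frac3{2m}}\|\mathbb F(\tau)\|_{q/2}\,{\rm d}\tau .
$$
Using $3/q=1-2/q'$ and $3/r=1-2/r'$, the time exponent becomes $-\gamma$ with
$$
\gamma=2-\tfrac32\Big(1+\tfrac1r-\tfrac2q\Big)=1-\tfrac2{q'}+\tfrac1{r'} .
$$
The one-dimensional HLS inequality maps $L^{q'/2}$ to $L^{r'}$ provided $0<\gamma<1$ and
$$
\frac1{r'}=\frac2{q'}+\gamma-1 ,
$$
which is exactly the identity just obtained. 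The condition $\gamma<1$ is $2r'>q'$. The condition $m\ge1$ (needed for Young) is $1/r\ge 2/q-1+\ldots$, i.e. $\gamma\le 1/2$. This translates into
$$
q'\ge \frac{4r'}{2+r'} ,
$$
which reproduces \Eqref{BM2}. We also need $q/2>1$ and $r\ge q/2$; both are automatic from these relations. This gives \Eqref{vf}.

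\emph{Main obstacle.} The delicate point is the endpoint $m=1$, i.e. $q'=4r'/(2+r')$. There $\gamma=1/2$, and one must check that HLS still applies; it does, since $1<q'/2<r'<\infty$ under $2r'>q'$. I also need to ensure that \Eqref{SS} makes sense for $\mathbb F$ merely in $L^{q'/2,q/2}$. I would handle this by proving \Eqref{vf} for smooth compactly supported $\mathbb F$ and extending by density, using the pointwise kernel bound to justify absolute convergence.
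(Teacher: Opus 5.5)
Your proposal is correct and follows the paper's proof essentially step for step: Oseen's bound $|D_\xi\varGamma(\xi,s)|\le C(|\xi|^2+s)^{-2}$, then Young's inequality in space with an auxiliary exponent $m\in[1,\frac32)$ (the paper's $p$), then the generalized Young (Hardy--Littlewood--Sobolev) inequality in time, with $m<\frac32$ giving $2r'>q'$ and $m\ge 1$ giving $q'\ge 4r'/(2+r')$. One slip to fix: $m\ge1$ is equivalent to $\gamma\ge\frac12$ (i.e.\ $1/r\le 2/q$), not $\gamma\le\frac12$ as you wrote, although the condition $q'\ge 4r'/(2+r')$ that you then state is the correct one.
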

\begin{proof} 
The first part of the lemma is a classical result of Oseen (see \cite[\S\S 5-7]{Oseen}).
To show the second part, we begin to recall the following estimate of the fundamental tensor, also due to Oseen \cite[p. 70]{Oseen},
\be
|D_{\xi}{I}\!\!\varGamma(\xi,s)|\le \Frac{C}{(|\xi|^2+s)^2}\,,
\eeq{SS_0}
for some constant $C>0$. Thus, taking into account \Eqref{SS_0}, we apply Young's inequality for convolution in \Eqref{SS} to get 
\be
\|v (t)\|_{r}\le \int_{t_1}^{t}(t-\tau)^{-2+\frac{3}{2p}}\|\mathbb F(\tau)\|_{\frac q2}\,{\rm d}\tau\,,
\eeq{FN_0} 
where
\be
\frac 1r=\frac 1p+\frac 2q-1\,
\,.
\eeq{FN}
Next, we observe that the function $h(s)=s^{-\alpha}$, $\alpha:=2-\frac3{2p}$, is in the weak space $L_w^{1/\alpha}(0,T)$, $T>0$ for all $\alpha>0$. In particular, if $\alpha<1$, namely, 
\be
p\in[1,\mbox{$\frac32$})\,.
\eeq{fn}
we may use in \Eqref{FN_0}  the generalized Young inequality 
to obtain
$$
\|v \|_{L^{r',r}}\le c_1\|\mathbb F\|_{L^{\frac{q'}2,\frac q2} }
$$
where
\be
\frac1{r'}=\alpha+\frac2{q'}-1=1+\frac2{q'}-\frac3{2p}\,.
\eeq{FN2}
Therefore, from \Eqref{BM1}$_2$, \Eqref{FN} and \Eqref{FN2}, we conclude the validity of \Eqref{BM2}$_1$. Moreover,  from \Eqref{FN2} and \Eqref{fn},
we infer $2r^\prime> q^\prime$. Finally, from \Eqref{FN} it follows $1/r\le 2/q$ which, after using \Eqref {BM1}$_2$ and \Eqref{BM2}$_1$, furnishes
the second inequality in \Eqref{BM2}$_2$,
thus completing the proof of the lemma.
\end{proof}

\begin{lemma}\label{lem:vorticity} 
Let $u$ be a distributional solution to the Navier-Stokes equations in $Q:=B\times(0,T)$ satisfying 
\be
u\in L^{s^\prime,s}\,,  \ \ \frac2{s^\prime}+\frac3{s}=1\,,\ \ \mbox{$s^\prime\in [4,8]$.}
\eeq{SC}
Then $\omega\in L^{2,2}$. 
\end{lemma}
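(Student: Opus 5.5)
Fix a subcylinder $Q_0=B_0\times(t_1,t_2)\Subset Q$ and a cut-off $\chi\in C_0^\infty(Q)$ with $\chi\equiv1$ on $Q_0$. The plan is to split $u$ on $Q_0$ into a smooth part and a part given by the Oseen representation \Eqref{SS} with forcing $\mathbb F:=\chi\,u\otimes u$, and then show that each part has $\nabla u\in L^{2,2}$ locally. Since $u\in L^{s',s}$, we have $\mathbb F\in L^{\frac{s'}2,\frac s2}(\mathscr R_{t_1,t_2})$.

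\textbf{Step 1 (splitting).} Let $w$ be defined by \Eqref{SS} with this $\mathbb F$ (after mollifying $\mathbb F$ and passing to the limit). By Lemma \ref{lem:Oseen}, $w$ solves the Stokes system \Eqref{SS1} with source $\Div\mathbb F$, and \Eqref{vf} gives $w\in L^{r',r}$ with $\|w\|_{L^{r',r}}\le c\|\mathbb F\|_{L^{s'/2,s/2}}$. On $Q_0$ the difference $v:=u-w$ then satisfies \Eqref{C1}, because the nonlinear terms cancel against $\Div\mathbb F$ and the pressure disappears against solenoidal test functions. Moreover $v\in L^{1,p}$ with $p>1$ and $\Div v=0$. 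Lemma \ref{lem:HE} then makes $v_\sigma$ smooth on smaller cylinders, and \Eqref{Hd} bounds every spatial derivative of the gradient part $\nabla\phi$ by $\|v(t)\|_p$, which is integrable in $t$. Hence $\nabla v$ lies in $L^{1}(L^\infty)$, and I will need to check that it is actually $L^2$ in time. The point is that $\nabla\nabla\phi$ is controlled by $\|v(t)\|_p\le\|u(t)\|_p+\|w(t)\|_p$, which belongs to $L^{s'}\cap L^{r'}$ in time. So this piece is fine as long as $r'\ge2$, and that holds automatically.

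\textbf{Step 2 (the Oseen part).} I need $\nabla w\in L^{2,2}$. My first choice is energy-type parabolic theory for the Stokes system with divergence source: if $\mathbb F\in L^{2,2}$, the classical estimate $\|\nabla w\|_{L^{2,2}}\le c\|\mathbb F\|_{L^{2,2}}$ holds, which follows from Plancherel, since the multiplier $\nabla\,\Div/(\partial_t-\Delta)$ composed with the Leray projection is bounded on $L^2(\R^4)$. This requires $u\in L^{4,4}$ locally. Since $Q_0$ is bounded, $L^{s',s}\subset L^{4,4}$ provided $s'\ge4$ and $s\ge4$, and $s\ge4$ means $3/s\le 3/4$, i.e. $2/s'\ge1/4$, i.e. $s'\le8$. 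This is exactly the hypothesis $s'\in[4,8]$ in \Eqref{SC}, which strongly suggests this is the intended route. So I will take $\mathbb F=\chi u\otimes u\in L^{2,2}(\R^4)$, derive the $L^2$ gradient bound for $w$ directly from the explicit representation \Eqref{SS} by Fourier transform in space-time, and conclude $\nabla w\in L^{2,2}$.

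\textbf{Step 3 (conclusion and main obstacle).} Combining the two steps gives $\nabla u=\nabla v+\nabla w\in L^{2,2}(Q_0')$ for every smaller cylinder $Q_0'$, so $\omega\in L^{2,2}$. The main obstacle is the rigorous justification of Step 1. One must show that $u-w$, with $w$ built from a merely integrable $\mathbb F$, satisfies \Eqref{C1} in the distributional sense on $Q_0$, and that $w\in L^{1,p}$ so that Lemma \ref{lem:HE} applies. I plan to handle this by mollifying $\mathbb F$, using the smooth theory of Lemma \ref{lem:Oseen} for each mollified solution, and passing to the limit using the bound \Eqref{vf}. The $L^2$ estimate of Step 2 is independent of that part of the argument and rests only on Plancherel.
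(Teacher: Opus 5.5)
Your proposal is correct and follows the paper's strategy. You split off an auxiliary Stokes solution driven by $\Div(u\otimes u)$ whose gradient is bounded in $L^{2,2}$ by $\|u\|_{L^{4,4}}^2$, which is exactly where $s'\in[4,8]$ enters, and then treat the remainder with Lemma \ref{lem:HE} and its harmonic gradient part with \Eqref{Hd}. The only difference is in how the auxiliary solution is built: the paper solves the initial-boundary value problem on $B'$ with mollified forcing and uses the energy inequality, whereas you use the whole-space Oseen potential of Lemma \ref{lem:Oseen} with cut-off forcing and Plancherel, which works equally well. With the sign convention of \Eqref{SS1}, the forcing must be $\mathbb F=-\chi\,u\otimes u$ for the nonlinear terms to cancel.
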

\begin{proof} 
Let $Q^\prime:=B'\times (t_1,t_2)\Subset Q$. Then the space-time mollifier, $u_\eta$, of $u$ satisfies the following equation
\be\left.\ba{ll}\medskip 
\partial_t u_{\eta}- \Delta u_\eta=-\Div( u u)_\eta-\nabla p^{\eta}\\
\Div u_\eta=0\ea\right\}
\ \ \mbox{in $Q'$}\,,
\eeq{3.5}
for some smooth scalar field $p^{(\eta)}$.
Consider, next, the following linear problem
\be\ba{cc}\medskip\left.\ba{ll}\medskip
\partial_t v- \Delta v=-\Div( u u)_\eta-\nabla \phi\\
\Div v=0
\ea\right\}\ \ \mbox{in $Q'$\,,}\\
v(x,t_1)=0\,,\ \ x\in B'\,;\ \  v(x,t)=0 \ \ \ \mbox{$(x,t)\in\partial B'\times (t_1,t_2)$}\,.
\ea
\eeq{3.6}
It is well known that this problem has one (and only one) regular solution $v^\eta$  (for instance, 
$v_\eta\in W^{1,2}((t_1,t_2); L^2(B'))\cap L^2((t_1,t_2);W^{2,2}(B')))$ which satisfies, in particular, the ``energy inequality''
\be
\|v^\eta\|_{L^{\infty,2}(Q')}+\|\nabla v^\eta\|_{L^{2,2}(Q')}\le C\,\|(u u)_\eta\|_{L^{2,2}(Q)}\,.
\eeq{3.7}
Now, from \Eqref{3.5} and \Eqref{3.6}$_1$ it follows that $ w_\eta:= u_\eta-v^\eta$ solves the Stokes equations
\be\left.\ba{ll}\medskip
\partial_t w_\eta-\Delta w_\eta=-\nabla q^{\eta}\\
\Div w_\eta=0\ea\right\}
\ \ \mbox{in $Q'$}\,.
\eeq{3.8}
By Lemma \ref{lem:HE}  we thus deduce, in particular,
$$
\|\nabla u_{\sigma\eta}\|_{L^{2,2}(Q^{\prime\prime})}\le C(\| u_\eta\|_{L^{1,1}(Q')}+\|\nabla v^\eta\|_{L^{2,2}(Q')})\,,  
$$
for all $Q^{\prime\prime}\Subset Q'$.
We then observe that $L^{s',s}\subset L^{4,4}$, for $s'\in[4,8]$. Therefore, 
since  $\|( u
u)_\eta\|_{L^{2,2}(Q)}\le C\,\| u\|_{L^{4,4}(Q)}^2$, we conclude that there exists $C_0>0$  such that
$$
\|\nabla u_{\sigma}\|_{L^{2,2}(Q^{\prime\prime})}\le C_0\,.
$$
However, by \Eqref{Hd} and the assumption, we have $D^2_x\phi\in L^{2,2}$, which implies $\nabla u\in L^{2,2}$, thus proving the stated property.
\end{proof}

\begin{lemma}\label{lem:velocity} 
Suppose $u$ is a distributional solution to the Navier-Stokes equations in $Q:=B\times(0,T)$, satisfying
\be
u\in L^{q^\prime,q}\,, \ \ \ \frac2{q^\prime}+\frac3{q}=1\,,
\eeq{SC1}
for some $q^\prime\in (2,4)\cup (8,\infty)$. Assume also that $u\in L^{4,p}$, some $p>1$, if $q'\in (2,4)$. Then, there exists $s^\prime=s^\prime(q^\prime)$ and corresponding $s$ such that \Eqref{SC} holds.
\end{lemma}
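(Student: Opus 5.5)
The plan is to improve the Serrin pair $(q',q)$ step by step until it lands in $[4,8]$, using the Oseen potential of Lemma \ref{lem:Oseen} for the nonlinear part of $u$. The rest of $u$ is a Stokes solution, controlled by Lemma \ref{lem:HE} and \Eqref{Hd}. Fix cylinders $Q'''\Subset Q''\Subset Q'\Subset Q$ and a cut-off $\psi\in C_0^\infty(Q')$ with $\psi\equiv1$ on $Q''$. Starting from the mollified equation \Eqref{3.5}, set
$$
v^\eta_i(x,t)=-\int_{t_1}^t\int_{\R^3}\partial_{y_\ell}\varGamma_{ij}(x-y,t-\tau)\,\big(\psi(uu)_\eta\big)_{\ell j}(y,\tau)\,{\rm d}y\,{\rm d}\tau .
$$
Since $\psi(uu)_\eta$ is smooth with compact support, Lemma \ref{lem:Oseen} shows that $v^\eta$ solves \Eqref{SS1} with $\mathbb F=-\psi(uu)_\eta$. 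Hence $w_\eta:=u_\eta-v^\eta$ solves the homogeneous Stokes system \Eqref{3.8} in $Q''$.

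Because $uu\in L^{q'/2,q/2}$, estimate \Eqref{vf} gives
$$
\|v^\eta\|_{L^{r',r}}\le c\,\|u\|^2_{L^{q',q}(Q')}
$$
uniformly in $\eta$, for every Serrin pair $(r',r)$ with $2r'>q'\ge 4r'/(2+r')$. For $w_\eta$, I split $w_\eta=w_{\eta\sigma}+\nabla\phi_\eta$. Lemma \ref{lem:HE} bounds $w_{\eta\sigma}$ and all its derivatives in $L^\infty(Q''')$ by $\|w_\eta\|_{L^{1,1}(Q'')}$, which is uniformly bounded by the bounds on $u$ and $v^\eta$. By \Eqref{Hd}, the harmonic gradient satisfies $\sup_{B'''}|\nabla\phi_\eta(t)|\le C\|w_\eta(t)\|_p$ for some $p>1$. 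So $\nabla\phi_\eta$ lies in $L^{a}(L^\infty)$, where $a$ is the time exponent available for $w_\eta$, i.e.\ the minimum of the time exponents of $u$ and of $v^\eta$. Letting $\eta\to0$ by weak lower semicontinuity, I obtain $u\in L^{r',r}(Q''')$ whenever $r'\le a$. Then I repeat on a smaller cylinder.

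\emph{Case $q'>8$.} Here the admissible range is $r'\in(q'/2,\infty)$, but the harmonic part only carries time exponent $q'$. So I choose $r'\in(q'/2,q')$; for instance $r'$ slightly larger than $q'/2$. Since the constraint $r'\le q'$ holds, $u\in L^{r',r}$ with the smaller time exponent. Iterating finitely many times, each step roughly halving the exponent while staying $>$ half of the previous one, I reach some $s'\in[4,8]$. This is possible because each step reduces by a factor in $(1/2,1)$, so the interval $[4,8]$, of ratio $2$, cannot be jumped over.

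\emph{Case $q'\in(2,4)$.} Now $q'\ge 4r'/(2+r')$ permits any $r'\le 2q'/(4-q')$, which exceeds $q'$. The map $x\mapsto 2x/(4-x)$ has repelling fixed point $2$, so iterating it from $q'>2$ reaches $4$ in finitely many steps. At each step I cap $r'\le 4$, stopping at $s'=4$, $s=6$. The obstacle is the harmonic part, which from $u\in L^{q'}$ alone only has time exponent $q'<r'$. This is exactly where the extra hypothesis $u\in L^{4,p}$ enters. Together with the bound on $v^\eta$ it gives $w_\eta\in L^{\min(4,r'),p}$, so $\nabla\phi_\eta\in L^{\min(4,r')}(L^\infty)$. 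Similarly, $w_{\eta\sigma}$ is bounded. Hence every piece of $u$ is in $L^{r'}$ in time with the spatial integrability required, as long as $r'\le4$.

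I expect the main difficulty to be this bookkeeping of time integrability for the nonsolenoidal harmonic part $\nabla\phi_\eta$, and making every bound uniform in $\eta$ before passing to the limit. The Oseen potential itself is handled directly by Lemma \ref{lem:Oseen}.
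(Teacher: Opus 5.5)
Your proposal is correct and follows essentially the paper's proof. The shared structure is the Oseen potential of the cut-off nonlinearity via Lemma \ref{lem:Oseen}, the Stokes remainder via Lemma \ref{lem:HE} and \Eqref{Hd}, and the same two iterations: a reduction by a factor in $(1/2,1)$ when $q'>8$ (the paper uses $2/3$), and, when $q'\in(2,4)$, the recursion $x\mapsto 2x/(4-x)$ (the paper's $1/r'_n=2/r'_{n-1}-1/2$ in reciprocals) capped at $4$ using $u\in L^{4,p}$. The differences are cosmetic: you use a space-time cut-off, and you apply the Helmholtz splitting to $w_\eta$ rather than to $u$, which if anything makes the bookkeeping of the harmonic part slightly cleaner.
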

\begin{proof}  
Let $Q:=B\times(t_1,t_2)$,  and let $\zeta=\zeta(x)$ be a smooth function that is 1 on arbitrarily given $B^{\prime\prime}\Subset B$ and 0 in $\R^3\backslash B$. Next, consider the vector function $v$ in \Eqref{SS} with $F_{\ell j}\equiv-\zeta\,(u_\ell u_j)_\eta$. By Lemma \ref{lem:Oseen}, we then deduce, on the one hand, that $ v\equiv v^\eta$ satisfies
\be\left.\ba{ll}\medskip
\partial_t{v}=\Delta v+\nabla\Phi-\Div[\zeta\, ( u u)_\eta]\\
\Div v=0\ea\right\}\ \ \mbox{ in $\mathscr R_{t_1,t_2}$},
\eeq{3.9}
and also, by \Eqref{vf} and the  assumption, that
\be
\|v^\eta\|_{L^{r',r}(Q)}\le c\,\|(u u)_\eta\|_{L^{\frac{q'}2,\frac q2}(Q) }\le c\,\|u\|_{L^{{q'},q}(Q)}^2 
\eeq{3.10}
where
\be\ba{ll}\medskip\displaystyle
2r'>q'\ge\frac{4r'}{2+r'}\,,\\  \displaystyle\frac{2}{r'}+\frac{3}r=1\,.
\ea
\eeq{3.11}
From \Eqref{3.5} and \Eqref{3.9} it then follows that $w_\eta:=u_\eta-v^\eta$ satisfies \Eqref{3.8}  in $Q^{\prime\prime}=B^{\prime\prime}\times(t_1,t_2)$, where $\zeta(x)\equiv 1$. Therefore, employing Lemma \ref{lem:HE} 
and \Eqref{3.10}  we deduce
$$\ba{rl}\medskip
\|u_{\sigma\eta}\|_{L^{r',r}(Q')}&\!\!\!\!\le c\,(\|u_{\sigma\eta}\|_{L^{1,1}(Q)}+\|(u u)_\eta\|_{L^{\frac{q'}2,\frac q2}(Q) })\\
&\!\!\!\!\le c\,(\| u\|_{L^{1,2}(Q)}+\|u\|_{L^{{q'},q}(Q)}^2)\,,\ \ \mbox{arbitrary $Q'\Subset Q$.}\ea 
$$
From this relation we then infer
\be
\|u_\sigma\|_{L^{r',r}(Q')}
\le c\,(\|u\|_{L^{1,2}(Q)}+\|u\|_{L^{{q'},q}(Q)}^2)\,,\ \ \mbox{arbitrary $Q'\Subset Q$.}
\eeq{3.12}
If $q^\prime\equiv q^\prime_0>8$, then $q^\prime\ge 4r'/(2+r')$ for any admissible choice of $r'$. In view of \Eqref{3.11} it then follows that 
\be
u_\sigma\in L^{r',r}\,,\,  \ \ \frac2{r'}+\frac3{r}=1\,,  
\eeq{JM}
for all $r'>q^\prime_0/2$. We want to show, by a recurrence procedure that uses \Eqref{3.12}, the existence of $\bar{r}'\in [4,8]$ for which \Eqref{JM} holds. In each step of this procedure, the corresponding value of $r'$ will be less than $q^\prime_0$, so that by \Eqref{HD} and \Eqref{har}, we also get, at each step, $\nabla\phi\in  L^{r',r}$. Consequently, \Eqref{JM} continues to hold with $u_\sigma$ replaced by $u$.   
Now, suppose \Eqref{SC1} holds for  some $q^\prime_0>8$. Then,   it follows that  \Eqref{JM} holds also for any $r'\ge \frac23q'_0:=q'_1$. If $q'_1\le 8$, we are done, otherwise we use \Eqref{3.12} with $q^\prime=q^\prime_1$ and derive, again from \Eqref{3.11}, the validity of \Eqref{JM} for any $r'\ge \frac23q'_1=(\frac23)^2q'_0:=q'_2$. If $q'_2\le 8$ the proof is complete; if not, we iterate this process a finite number of times, until we find an integer $\bar{n}$ such that condition \Eqref{JM} is satisfied for all $r'\ge q^\prime_{\bar{n}}:=(\frac23)^{\bar{n}}q^\prime_0\le 8$ and, as a result, by some $\bar{r}^\prime\in[4,8]$. Note that, in this process, the domains $Q$ and $Q'$ may ``shrink'' at each step by an arbitrarily given small amount, and the constant $c$ in \Eqref{3.12} may also change. However, $Q'$ and $Q$ are themselves arbitrary and the number of steps is finite. Therefore,  the proof in the case $q>8$ is completed. 
Suppose next $q^\prime\in (2,4)$. We  notice that since in this case, by assumption, $u\in L^{4,p}$ for some $p>1$, Thus, by \Eqref{Hd} it follows  $\nabla\phi\in L^{4,{\rho}}$, for all ${\rho}\in (1,\infty)$. Consequently, we infer that 
\be
u\in L^{\gamma,\rho}\,\  \gamma\le4, \ \ \rho\in (1,\infty),\,\ \mbox{as long as $u_\sigma$ satisfies the same property}\,.
\eeq{im} 
Next, we write $q'=4\,\alpha$ for some $\alpha\in (\frac12,1)$. From \Eqref{3.11}$_1$ we find that there exists $ r'_0$ such that \Eqref{JM} holds with
$$
\frac1{r'_0}=\frac1{2\alpha}-\frac12\,, \ \ \mbox{and all $r'\le r^\prime_0$.}
$$
If $r^\prime_0>4$, in view of \Eqref{im},  we are done. Otherwise, again by \Eqref{im}, we can use this value of $r'$ (and corresponding $r$) as $q'$ (and $q$) in \Eqref{3.12}. By \Eqref{3.11}$_1$ we then deduce that there is $r'\equiv r'_1$
such that \Eqref{JM} is satisfied with
$$
\frac1{r'_1}=\frac1{\alpha}-1-\frac12\,,  \ \ \mbox{and all $r'\le r^\prime_1$.}
$$
If $r^\prime_1<4$ we repeat this procedure $n+1$ times, till we find $r'_n>4$ such that \Eqref{JM} is valid with $r'_n$ and all $r'\le r'_n$. To show that, for any given $\alpha\in(\frac12,1)$, such $r'_n$ exists, we observe that, by the above procedure, it follows that 
$$
\frac1{r'_n}=\frac{2^{n-1}}{\alpha}-\sum_{k=0}^{n-1}2^k-\frac12\,,\ \ n\ge1.
$$
Thus, the request $r'_n>4$ for some $n=\bar{n}$ is equivalent to the existence of $\bar{n}$ such that
$$
\frac1\alpha<\sum_{k=0}^{\bar{n}-1}2^{-k}+\frac3{2^{\bar{n}+1}}\,,
$$
which is certainly satisfied since $1/\alpha<2$ and the series converges to 2. As in the case $q>8$ discussed before, also in this iterative scheme,  $Q'$ may ``shrink'' at each step by an arbitrarily given small amount as well as the constant $c$ in \Eqref{3.12} may change but,  as previously mentioned, this does not affect the conclusion of the lemma which is thus completely proven.
\end{proof}

Theorem~\ref{theo:main2} follows at once from the previous two lemmas. Consequently, combining Theorem~\ref{theo:main2} with Theorem~\ref{theo:02}, we arrive at our main result, Theorem~\ref{theo:maingen}.

\ \\

\noindent{\bf Acknowledgement.} The work of R.M. Chen is partially supported by NSF grant DMS-2205910. The work of G.P. Galdi is partially supported by NSF grant DMS-2307811.     
The work of A.J. Schikorra is partially supported by NSF Career DMS-2044898

\end{document}